\theoremstyle{plain}
\newtheorem{thm}{Theorem}
\newtheorem{lem}[thm]{Lemma}
\theoremstyle{definition}
\theoremstyle{remark}
\newtheorem{rmk}[thm]{Remark}
\theoremstyle{plain}
\newcommand{\Id}{{{\mathchoice {\rm 1\mskip-4mu l} {\rm 1\mskip-4mu l}
      {\rm 1\mskip-4.5mu l} {\rm 1\mskip-5mu l}}}}
\newcommand\supp{\operatorname{supp}}
\def\NN{\mathbb{N}}
\def\ZZ{\mathbb{Z}}
\def\RR{\mathbb{R}}
\def\Id{\mathbb{1}}
\def\dd{\mathrm{d}}
\def\Ham{Ham}
\def\Cal{Cal}
\def\len{\operatorname{len}}
\def\Area{\operatorname{Area}}
\def\dist{\operatorname{dist}}
\def\diam{\operatorname{diam}}
\begin{document}

\pagestyle{headings}

\bibliographystyle{alphanum}

%\title{Quasimorphisms on Hamiltonian groups of surfaces that are not continuous in the Hofer norm}
\title{Quasimorphisms on surfaces and continuity in the Hofer norm}

\thanks{The author was supported by the Azrieli Fellowship.}

% \date{\today} 
\author{Michael Khanevsky}
\address{Michael Khanevsky, Mathematics Department,
Technion - Israel Institute of Technology
Haifa, 32000,
Israel}
\email{khanev@math.technion.ac.il}

\begin{abstract}
There is a number of known constructions of quasimorphisms on Hamiltonian groups. We show that on surfaces many of these quasimorphisms are not compatible 
with the Hofer norm in a sense they are not continuous and not Lipschitz. The only exception known to the author is the Calabi quasimorphism on a sphere ~\cite{En-Po:calqm} and the induced
quasimorphisms on genus-zero surfaces (e.g. ~\cite{Bi-En-Po:Calabi-qm}).
\end{abstract}

\maketitle

%%%%%%%%%%%%%%%%%%%%%%%%%%%%%%%%%%%%%%%%%%%%%%%%%%%%%%%%%%%%%%%%%%%%%%%%%%
\section{Introduction and results}
%%%%%%%%%%%%%%%%%%%%%%%%%%%%%%%%%%%%%%%%%%%%%%%%%%%%%%%%%%%%%%%%%%%%%%%%%%

Let $(M, \omega)$ be a symplectic manifold. The Hamiltonian group of $M$ admits Hofer's metric which, roughly speaking, measures mechanical energy needed to deform one
Hamiltonian into another. This metric has many nice properties - it is bi-invariant and on closed manifolds the induced geometry is a natural one: any other bi-invariant 
$C^\infty$-continuous Finsler metric is equivalent to Hofer's (~\cite{B-O:equiv}). However until now the large scale geometry not well understood as there is a very limited set of tools available to 
estimate distances. It is particularly difficult to produce lower bounds.

Ideally, one would like to construct an invariant that estimates the distance and at the same time respects the group structure - namely, a homomorphism. In the case  
when the symplectic form $\omega$ is exact the Calabi homomorphism is such a tool as it is 1-Lipschitz with respect to the metric. However, by a well known result 
~\cite{Ba:simple-diff}, $\Ham(M)$ is either simple (when $M$ is closed) or it contains a
simple subgroup of codimension 1, therefore all homomorphisms factor through Calabi. As an attempt to relax the constraints one may consider quasimorphisms: maps $\Ham(M) \to \RR$ which are additive up to a bounded
defect. When one is interested in coarse estimates of geometry, the defect does not play a significant role. There is a number of known constructions of quasimorphisms 
for various manifolds and the main question is whether they can be utilized to extract geometric information. For example, the Calabi quasimorphism on $\Ham(S^2)$ and certain other manifolds (~\cite{En-Po:calqm}) 
arises as a Floer-theoretic spectral invariant and is naturally Lipschitz with respect to the Hofer metric. There is a series of other constructions that consider topological 
invariants of orbits of a Hamiltonian flow. Some of these quasimorphisms can be seen as generalizations of the Calabi homomorphism so there was a hope they may inherit 
metric properties. Unfortunately, this is not the case and all constructions known to the author except for the Calabi quasimorphisms and the induced ones are not Lipschitz with respect to 
Hofer's metric.
In this article we review two families of quasimorphisms on surfaces constructed by Polterovich and Gambaudo-Ghys and show that they are neither Lipschitz nor continuous in the Hofer metric. 
Some results can be generalized to symplectic manifolds of higher dimension. 

The situation is different when one considers some other (not bi-invariant) metrics on $\Ham (M)$. For example, the quasimorphisms
mentioned above are continuous in the $L^p$ norm (see ~\cite{Br-Sh:Lp-diam-S2, Br-Sh:Lp-geom-S2, Br-Sh:Lp-geom-surf}).

\medskip

\emph{Acknowledgements:}
The author is grateful to L. Buhovski, L. Polterovich and E. Shelukhin for useful discussions and comments.

%%%%%%%%%%%%%%%%%%%%%%%%%%%%%%%%%%%%%%%%%%%%%%%%%%%%%%%%%%%%%%%%%%%%%%%%%%%
\section{Preliminaries}\label{S:def}
%%%%%%%%%%%%%%%%%%%%%%%%%%%%%%%%%%%%%%%%%%%%%%%%%%%%%%%%%%%%%%%%%%%%%%%%%%%

Let $(M, \omega)$ be a symplectic manifold, $g$ a Hamiltonian diffeomorphism with compact support in $M$.
The Hofer norm $\|g\|$ (see ~\cite{Hof:TopProp}) is defined by 
\[
  \|g\| = \inf_G \int_0^1 \max \left(G(\cdot, t) - \min G(\cdot, t) \right) \dd t 
\]
where the infimum goes over all compactly supported Hamiltonian functions $G : M \times [0,1] \to \RR$
such that $g$ is the time-1 map of the induced flow. The Hofer metric is given by
\[
  d_H (g_1, g_2) = \| g_1 g_2^{-1}\|.
\]

\medskip

Let $G$ be a group. A function $r : G \to \RR$ is called a \emph{quasimorphism} if there exists 
a constant $D$ (called the \emph{defect} of $r$) such that $|r(fg) - r(f) - r(g)| < D$ for all $f, g \in G$. The quasimorphism $r$ 
is called \emph{homogeneous} if it satisfies $r(g^m) = mr(g)$ for all $g \in G$ and $m \in \ZZ$.
Any homogeneous quasimorphism satisfies $r(fg) = r(f) + r(g)$ for commuting elements $f, g$. Every quasimorphism
is equivalent (up to a bounded deformation) to a unique homogeneous one ~\cite{Ca:scl}. A quasimorphism is called \emph{genuine} if it is not a homomorphism.

\medskip

\begin{lem}
  Let $G$ be a Lie group equipped with a bi-invariant path metric, $r: G \to \RR$ a homogeneous quasimorphism. Then $r$ is Lipschitz %with respect to the norm $\|g\| = \dist\{g, \Id\}$ 
  if and only if it is continuous at the identity.
\end{lem}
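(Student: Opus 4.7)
The forward direction is immediate: any Lipschitz function is continuous everywhere, in particular at $e$. For the converse, assume $r$ is continuous at the identity $e$, and note that homogeneity forces $r(e) = 0$, so continuity at $e$ yields $\delta > 0$ and $C > 0$ with $|r(h)| \le C$ for every $h$ with $d(h,e) < \delta$. My plan proceeds in two steps: first establish the pointwise bound $|r(g)| \le K\, d(g,e)$ for $K := (C+D)/\delta$, then upgrade it to the genuine Lipschitz inequality $|r(f)-r(g)|\le K\,d(f,g)$.

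For the first step, fix $g\in G$ and $\varepsilon>0$. Since $d$ is a path metric I can choose a path from $e$ to $g$ of length at most $d(g,e)+\varepsilon$ and subdivide it into $N$ consecutive arcs each of length $<\delta$, with $N \le (d(g,e)+\varepsilon)/\delta + 1$. Writing $g = h_1 \cdots h_N$ for the successive ratios yields $d(h_i,e)<\delta$, hence $|r(h_i)|\le C$. Applying the defect inequality $(N-1)$ times gives $|r(g)| \le NC + (N-1)D \le N(C+D)$; letting $\varepsilon\to 0$ produces $|r(g)| \le K\,d(g,e) + (C+D)$. To strip the additive constant I apply the same inequality to $g^m$ in place of $g$, use $d(g^m,e) \le m\,d(g,e)$ by the triangle inequality and $r(g^m)=m\,r(g)$ by homogeneity, and divide by $m$; sending $m\to\infty$ yields $|r(g)| \le K\,d(g,e)$.

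For the second step, the defect inequality together with bi-invariance gives $|r(f)-r(g)| \le |r(fg^{-1})| + D \le K\,d(f,g) + D$ for all $f,g$. I remove the residual $+D$ by a second powers argument: bi-invariance implies $d(f^m,g^m) \le m\,d(f,g)$ (apply left- and right-invariance alternately and induct), so applying the previous bound to the pair $(f^m,g^m)$ and using $r(f^m)-r(g^m) = m(r(f)-r(g))$ from homogeneity, division by $m$ gives $|r(f)-r(g)| \le K\,d(f,g) + D/m$, and $m\to\infty$ finishes the proof.

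The main obstacle is bookkeeping of the defect $D$: the subdivision argument naturally produces only a coarse Lipschitz inequality with an additive error term, and both homogeneity and \emph{both sides} of bi-invariance are needed --- the former to let us multiply by $1/m$, the latter to ensure the metric does not grow faster than linearly along powers --- to squeeze the additive error out and obtain honest Lipschitz continuity.
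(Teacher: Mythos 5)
Your proof is correct, and while it uses the same essential ingredients as the paper (path subdivision near the identity, homogeneity, and bi-invariance), it is organized along a genuinely different route. The paper splits into two regimes: for $\dist(h,\Id)\geq\delta$ it subdivides a path from $\Id$ to $h$ and is content with a coarse linear bound, while for $\dist(h,\Id)<\delta$ it writes $(hg)^k g^{-k}$ as a product of $k$ conjugates of $h$, observes this product still has norm at most $\delta$ so continuity applies to it directly, and divides by $k$. You instead prove a single clean pointwise estimate $|r(g)|\leq K\,\dist(g,\Id)$ valid at all scales --- using the powers trick once to strip the additive constant $C+D$ left over from the subdivision --- and then globalize via $|r(f)-r(g)|\leq|r(fg^{-1})|+D$, using the powers trick a second time together with $\dist(f^m,g^m)\leq m\,\dist(f,g)$ to strip the residual $+D$. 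That last inequality is the same algebraic fact underlying the paper's conjugation identity (both amount to telescoping $f^m g^{-m}$ into $m$ conjugates of $fg^{-1}$), but your packaging avoids the case analysis entirely and yields an honest Lipschitz constant $K=(C+D)/\delta$ with no leftover additive error, which is slightly sharper than the paper's $\frac{3}{\delta}(\varepsilon+D)+\frac{D}{\delta}$. The paper's version, in exchange, exhibits the local Lipschitz behaviour more explicitly. One small imprecision worth noting: $d(g^m,\Id)\leq m\,d(g,\Id)$ is not a consequence of the triangle inequality alone --- it needs bi-invariance to identify $d(g^{i},g^{i-1})$ with $d(g,\Id)$ --- but you acknowledge this in your closing remark, so it is a matter of phrasing rather than a gap.
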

\begin{proof}
  The fact that Lipschitz property implies continuity is obvious. We show the opposite direction.
  Suppose $r$ is continuous at the identity, namely, given $\varepsilon > 0$, there exists $\delta = \delta(\varepsilon) > 0$ such that any $g \in G$ with 
  $\dist(g, \Id) = \|g\| \leq \delta$ satisfies $|r(g)| < \varepsilon$. Fix an $\varepsilon > 0$ and select an appropriate $\delta = \delta(\varepsilon)$.
  
  \medskip
  
  The Lipschitz property holds on a large scale:  
  let $g, h \in G$ with $g$ arbitrary and $\|h\| \geq \delta$. By the triangle inequality,
  \[
	|r(hg) - r(g)| \leq |r(h)| + D
  \]
  where $D$ denotes the defect of $r$.
  
  Pick a path $h_t$ connecting $h$ to the identity with $\len(h_t) \leq \|h\|+\delta$. 
  The path $h_t$ can be cut into $N := \left\lceil \frac{\len(h_t)}{\delta} \right\rceil < \frac{\|h\|}{\delta}+2$ arcs of length at most $\delta$, 
  hence $h$ can be presented as a composition of $N$ elements in the $\delta$-neighborhood of the identity. Therefore 
  \[
	|r(h)| = |r(h_1 \cdot \ldots \cdot h_N)| < N \cdot \left(\varepsilon + D\right) < \left(\frac{\|h\|}{\delta} + 2\right)\left(\varepsilon + D\right) \leq 
	\|h\| \cdot \frac{3}{\delta} (\varepsilon + D).
  \]
  Finally,
  \[
	|r(hg) - r(g)| \leq |r(h)| + D <  \|h\| \cdot \frac{3}{\delta} (\varepsilon + D) + \|h\| \cdot \frac{D}{\delta}
  \]
  and the property holds since $\|h\| = \dist (h, \Id) = \dist (hg, g)$ by bi-invariance of the metric.
  
  \medskip
  
  We show that the Lipschitz property holds also locally: pick $g, h \in G$ with $g$ arbitrary and $h$ in the $\delta$-neighborhood of the identity.
  Denote
  \[
	  k := \left\lfloor \frac{\delta}{\|h\|} \right\rfloor > \frac{\delta}{2 \|h\|}.
  \]
  
  Then by homogenuity and the triangle inequality 
  \begin{eqnarray*}
	|r(hg) - r(g)| &=& \frac{1}{k} \cdot \left| r\left((hg)^k\right) - r \left(g^k\right) \right| = \\
					& &	 \frac{1}{k} \cdot \left| r\left(h \cdot ghg^{-1} \cdot g^2hg^{-2} \cdot \ldots \cdot g^{k-1}hg^{1-k} \cdot g^k\right) - r \left(g^k\right) \right| \leq \\
					& &	 \frac{1}{k} \cdot \left( \left| r\left(h \cdot ghg^{-1} \cdot g^2hg^{-2} \cdot \ldots \cdot g^{k-1}hg^{1-k} \right)\right| + D \right)
  \end{eqnarray*}
  By bi-invariance $\|g^n h g^{-n}\| = \|h\| \leq \frac{\delta}{k}$ and the triangle inequality implies 
  \[
	 \left\|h \cdot ghg^{-1} \cdot g^2hg^{-2} \cdot \ldots \cdot g^{k-1}hg^{1-k} \right\| \leq \delta.
  \]
  Therefore using our choice of $\delta$,
  \[
	  |r(hg) - r(g)| < \frac{1}{k} \cdot (\varepsilon + D) < \frac{2 \|h\|}{\delta} (\varepsilon + D) = \frac{2}{\delta} (\varepsilon + D) \cdot \dist(hg, g).
  \]
  
  \medskip
  
  Combining the results above, $r$ is globally Lipschitz.
\end{proof}

Therefore a non-Lipschitz quasimorphism is also non-continuous. (With some extra effort one may show it is nowhere continuous.) 
In what follows we will concentrate only on the Lipschitz property.

%%%%%%%%%%%%%%%%%%%%%%%%%%%%%%%%%%%%%%%%%%%%%%%%%%%%%%%%%%%%%%%%%%%%%%%%%%%
\section{Calabi quasimorphisms}\label{S:Cal}
%%%%%%%%%%%%%%%%%%%%%%%%%%%%%%%%%%%%%%%%%%%%%%%%%%%%%%%%%%%%%%%%%%%%%%%%%%%

Let $F_t : M \to \RR$, $t \in [0, 1]$ be a time-dependent smooth function with compact support. Define
\[\widetilde{\Cal} (F_t) = \int_0^1 \left( \int_M F_t \omega \right) \dd t.\] When $\omega$ is exact on $M$,
$\widetilde{\Cal}$ descends to a homomorphism $\Cal: \Ham(M) \to \RR$ which is called 
the Calabi homomorphism. For any $g \in \Ham(M)$, $\Cal(g) \leq \| g \|$ by its definition. 

\medskip

Equip the unit sphere $S^2$ with a symplectic form normalized by $\int_{S^2} \omega = 1$. ~\cite{En-Po:calqm} presents construction of a homogeneous quasimorphism $\Cal_{S^2} : \Ham (S^2) \to \RR$ based on a Floer-theoretical spectral invariant of $\Ham(S^2)$.
Being a spectral invariant, it is Lipschitz with respect to the Hofer norm.

A symplectic embedding $j: M \to S^2$ of a genus zero surface into a sphere induces the pullback quasimorphism $j^*(Cal_{S^2})$ on $\Ham(M)$. In ~\cite{Bi-En-Po:Calabi-qm} the authors
show that this construction provides continuum of linearly independent quasimorphisms, all Lipschitz in the Hofer norm.
These quasimorphisms provide useful tools to analyze the Hofer geometry of $\Ham (M)$. For example, existence of such quasimorphisms allows to construct quasi-isometric embeddings of
``large'' sets into $\Ham(M)$ (see ~\cite{En-Po:calqm}) or into spaces of Hamiltonian-isotopic Lagrangian submanifolds of $M$ (e.g. ~\cite{Kh:diam, Sey:unb_lagr_ball}).

These quasimorphisms can be seen as generalizations of the Calabi homomorphism as they coincide with the Calabi invariant given a Hamiltonian isotopy which is supported in a 
displaceable subset. There was a hope that other quasimorphisms (some of them can also be seen as certain generalizations of Calabi) may share geometric properties with the Calabi homomorphism, 
for example, turn to be Lipschitz. Unfortunately, it is not the case. We analyze two constructions of quasimorphisms in the sections below.

%%%%%%%%%%%%%%%%%%%%%%%%%%%%%%%%%%%%%%%%%%%%%%%%%%%%%%%%%%%%%%%%%%%%%%%%%%%
\section{Polterovich construction}\label{S:Polt}
%%%%%%%%%%%%%%%%%%%%%%%%%%%%%%%%%%%%%%%%%%%%%%%%%%%%%%%%%%%%%%%%%%%%%%%%%%%
\subsection{The quasimorphisms}
Let $(M, \omega)$ be a symplectic manifold with non-abelian fundamental group. We equip it with an auxiliary Riemannian metric and pick a basepoint $z \in M$. 
Given a non-trivial homogeneous quasimorphism $r: \pi_1 (M, z) \to \RR$ one constructs a quasimorphism $\rho : \Ham(M) \to \RR$ as follows.
For each $x \in M$ choose a short geodesic path $\gamma_x$ which connects $x$ to $z$. Given a Hamiltonian isotopy $h_t$ between $\Id$ and $h \in \Ham(M)$, for each $x \in M$ denote by 
$l_x (h_t) = \gamma_x * \{h_t(x)\} * -\gamma_{h(x)}$ the closed loop defined by concatenation of geodesic paths with the trajectory of $x$ under $h_t$.
Let
\[
  \hat{\rho}(h_t) = \int_M r([l_x (h_t)]) \omega
\]
where $[l_x (h_t)] \in \pi_1 (M, z)$ is the equivalence class of $l_x (h_t)$. $\hat{\rho}: \widetilde{\Ham (M)} \to \RR$ is a quasimorphism and its homogenization
\[
  \rho(h_t) = \lim_{k \to \infty} \frac{1}{k} \int_M r([l_x ((h_t)^k)]) \omega
\]
does not depend on the choices of metric, $z$, geodesic paths or $h_t$, resulting in a homogeneous quasimorphism $\rho: \Ham(M) \to \RR$. For more details see ~\cite{Po:HF-dyn-gps}.

\subsection{Noncontinuity}
We start with a simple example which will be extended to the general case later.
Let $M$ be a closed surface of genus two equipped with a symplectic form $\omega$. 
Denote by $a, b, a', b'$ the standard generators of $\pi_1 (M, *)$ and pick a quasimorphism $r:\pi_1 (M, *) \to \RR$ such that $r (a) = r(b) = 0, r(aba^{-1}b^{-1}) = 1$
(for example, a word counting quasimorphism). Let $\rho : \Ham(M) \to \RR$ be the quasimorphism induced by $r$.
Denote by $P$ a pair of pants whose boundary components represent the free homotopy classes of $aba^{-1}b^{-1}, a, a^{-1}$.

\begin{figure}[!htbp]
\begin{center}
\includegraphics[width=0.4\textwidth]{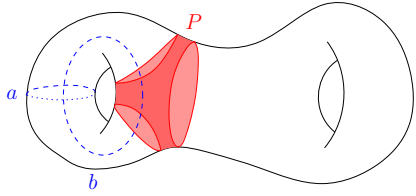}
\caption{Pair of pants}
\end{center}
\end{figure}

We construct a Hamiltonian $H$ from the indicator function of $P$ which is cut off near the boundary. Note that the flow $\phi_H^t$ generated by $H$ is controlled by the cutoff and is 
supported in a tubular neighborhood of the boundary $\partial P$. For a reasonable choice of the cutoff (when the level sets $\{H^{-1} (c)\}_{c \in (0,1)}$ have exactly one connected component near 
each connected component of $\partial P$) support of the flow consists of three strips around $\partial P$. These strips are foliated by periodic trajectories in the 
free homotopy classes of $a, a^{-1}, aba^{-1}b^{-1}$. The classes $a, a^{-1}$ do not affect the value of $\rho (\phi_H^t)$ and contribution comes only from trajectories in the class $aba^{-1}b^{-1}$.
A simple computation shows that $\rho (\phi_H^t) = t$ and it is independent of the cutoff.
(Intuitively, when one applies a steeper cutoff, velocity of the flow increases linearly with the slope and inversely proportional to its area of support, 
so faster rotation is compensated by the reduced volume of motion.)

We adjust the cutoff so that nearly all non-stationary points near $\partial P$ have a fixed rational period. Pick an area preserving parametrization $S^1 \times (-\varepsilon, \varepsilon)$ with coordinates 
$(\theta, h)$ near each connected component of $\partial P$ (in our notation $S^1 = \RR/\ZZ$). Suppose that $P$ is defined in these coordinates by $\{h \geq 0\}$. 
Pick a natural number $n > 1/\varepsilon$ and a smoothing parameter $\delta << 1/n$, define the cutoff function $c : (-\varepsilon, \varepsilon) \to [0, 1]$ by
\[
  c (h) = \left\{ \begin{array}{ll} 0 & \textrm{if $h \leq 0$} \\ hn  & \textrm{if $\delta \leq h \leq \frac{1}{n} - \delta$} \\
													   1 & \textrm{if $h \geq \frac{1}{n}$}
                          \end{array} \right.
\]
and interpolate it smoothly in the intervals $\left\{0 \leq h \leq \delta\right\}, \left\{\frac{1}{n} - \delta \leq h \leq \frac{1}{n}\right\}$.
Let $H (\theta, h) = c (h)$.
As the result, all points in the annulus $\left\{ \delta \leq h \leq \frac{1}{n} - \delta\right\}$ have rational period $T = 1/n$ and the remaining non-stationary points in $P$ 
are contained in a region of area $6 \delta$ and have controlled dynamics in a sense that their period is at least $1/n$. Note that the smoothing parameter $\delta$ can be chosen 
arbitrarily small and can be reduced independently of $P$ and $n$. Such adjustment of $\delta$ does not increase the maximal velocity of the flow.

\medskip

\underline{Step I:}
Given $N >> 1$ consider a pair of pants $P$ as above with $\Area(P) << \frac{1}{N^2}$. 
We place $N$ deformed copies $P_1 = h_1(P), \ldots, P_N = h_N(P)$ of $P$ (where the deformations $h_i$ are symplectic near $P$ and isotopic to the identity) 
in a way so that no triple intersections of $P_i$ occur.

\begin{figure}[!htbp]
\begin{center}
\includegraphics[width=0.5\textwidth]{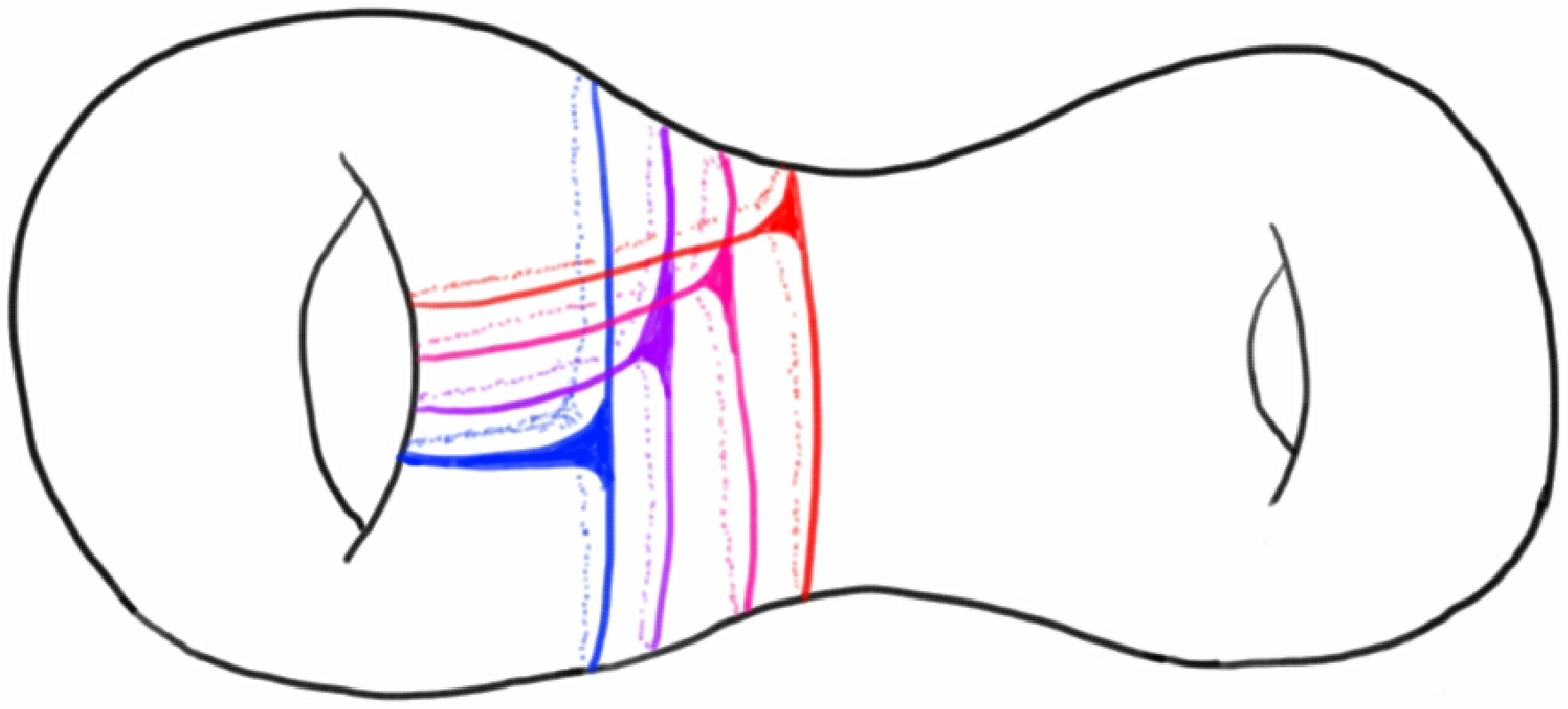}
\caption{}
\end{center}
\end{figure}

Pick a Hamiltonian function $H$ supported in $P$ as described above. The cutoff of $H$ is adjusted in a way that most (up to a region of area $6\delta << \Area (P)$) 
non-stationary points have the same period $T = 1/n$ for some $n \in \NN$. Denote the resulting Hamiltonian by $H_P$ and let $H_i$ be the deformations of $H_P$ 
supported in each $P_i$ defined by $h_i^* (H_i) = H_P$. Let $\phi_i^t = h_{i,*} \phi^t_{H_P}$ denote their Hamiltonian flows. 

\underline{Step II:}
Pick a system of disjoint open neighborhoods $\left\{ U_{ij} \right\}_{1 \leq i < j \leq N}$ such that $P_i \cap P_j \subset U_{ij}$. For $i > j$ let $U_{ij} = U_{ji}$.
We assume that in each connected component $Q$ of $U_{ij}$ there is enough ``empty space'' not occupied by $P_i$ and $P_j$: $\Area (Q) > 3 \Area ((P_i \cup P_j) \cap Q)$. 
Otherwise we may either repeat the construction applying the same deformations $h_i$ to a narrower pair of pants $P$ or deform the symplectic form $\omega$ in the complement of the union of all pairs of pants and 
redistribute area to satisfy this condition.

Let $t_0$ be the minimal time required for any of the flows $\phi_i$ to travel between two neighborhoods $U_{kl}$ and $U_{k' l'}$.
Namely, given $t < t_0$, $\phi_i^t (U_{k l}) \cap U_{k' l'} = \emptyset$ for all $\{k,l\} \neq \{k',l'\}$ and all $1 \leq i \leq N$. 
Pick a natural $m$ such that $\tau := \frac{T}{m} = \frac{1}{mn} < \frac{t_0}{2}$.

\underline{Step III:}
We make a series of adjustments to this construction. The goal is to ensure that most points are $T$-periodic while the rest (that are beyond our direct control) will not travel too far, 
hence their contribution to the quasimorphism will be small.

First of all, we modify the smoothing parameter $\delta$ that appears in the construction of $H_P$.
Fix an auxiliary Riemannian metric on $M$. Let $v$ be the maximal point velocity of the flows $\phi_i$ and $d_Q$ be the maximal diameter of 
connected components $Q$ of the neighborhoods $U_{ij}$. 
Let $$S_\tau = \{[\gamma] \in \pi_1 (M, *) \, \big| \, \len(\gamma) \leq \tau v + d_Q + 2 \diam (M)\}$$ be a set of homotopy classes of short loops and 
$r_\tau = \max \{r (c) \, \big| \, c \in S_\tau\}$ be the maximal value of the quasimorphism $r:\pi_1 (M, *) \to \RR$ 
restricted to $S_\tau$. (Short loops represent a finite number of homotopy classes so the maximum is attained.)
We pick 
\[
	\delta < \min \left( \frac{\tau}{20 (r_\tau + D) N}, \; \frac{\tau m}{3 N} \right)
\]
where $D$ is the defect of $r$. $H_i$ and the deformed flows $\phi_i$ are adjusted according to this value of the smoothing parameter $\delta$.
Note that this modification does not increase the maximal velocity of the flow. Hence $\tau$ is still less than the travel time between different neighborhoods $U_{kl}$.
 
\begin{figure}[!htbp]
\begin{center}
\includegraphics[width=0.65\textwidth]{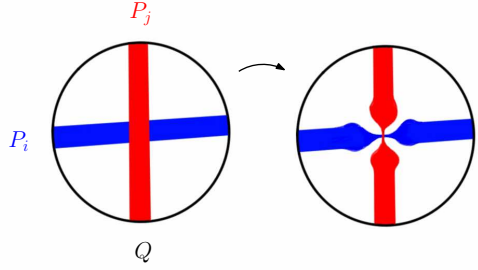}
\caption{Local adjustments}
\end{center}
\end{figure}

Furthermore, perturb $P_1, \ldots, P_N$ inside the neighborhoods $U_{ij}$ so that $$\Area (P_i \cap P_j) \leq \frac{\delta}{mN}.$$
We apply the same perturbations to the Hamiltonian functions $H_i$ and the flows $\phi_i$. This modification does not affect dynamics of the flows outside the neighborhoods $U_{ij}$, hence
the minimal travel time between neighborhoods is still greater than $\tau$. Moreover, the modified flows may have increased velocities locally inside $U_{ij}$, but on a larger scale (time $\tau$ and above) 
they travel approximately the same distance.

\medskip

Let $\Phi^t = \phi_1^t \circ \ldots \circ \phi_N^t$ be the composition of the time-$t$ maps of the flows.
We claim that for $t = \tau$, $$\rho (\Phi^\tau) = N \tau + O (\delta) > (N - 2) \tau $$ while the Hofer norm is bounded by $2 \tau$. Taking $N \to \infty$, we deduce that $\rho$ is not Lipschitz.

\begin{figure}[!htbp]
\begin{center}
\includegraphics[width=0.6\textwidth]{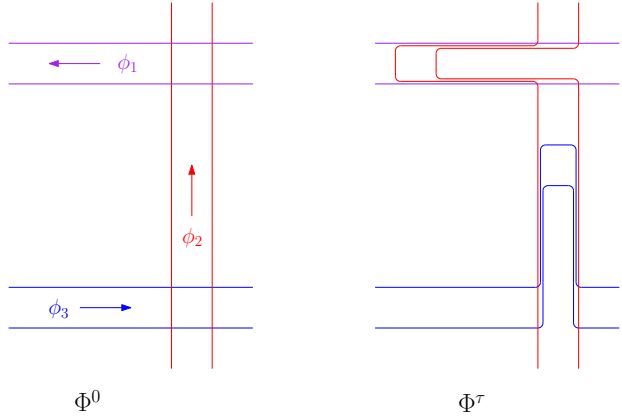}
\caption{The composition}
\end{center}
\end{figure}

By the composition formula, the flow $\Phi^t$ is generated by the time-dependent Hamiltonian 
\[
  G (t) = H_1 + \left( \phi_1^{-t} \right)^* (H_2) + \ldots + \left( \phi_1^{-t} \circ \ldots \circ \phi_{N-1}^{-t} \right)^* (H_N)
\]
Namely, the pairs of pants $P_i$ which support the Hamiltonian functions $H_i$ are deformed and ``dragged'' along the flows. For time $t < t_0$ supports of the summands cannot have triple intersections:
that happens only after some $P_i$ is dragged by $\phi_j^t$ to a point where it intersects some (possibly deformed) $P_k$. That means that certain points $p \in U_{ij}$ have arrived in 
(or passed through) other neighborhood $U_{jk}$ and that cannot happen before $t = t_0$. As each summand admits values in the interval $[0, 1]$, $\max G_t - \min G_t \leq 2$ for all $0 \leq t < t_0$ and the bound 
on the Hofer norm follows.

\medskip

We estimate the value of the quasimorphism $\rho (\Phi^\tau)$. 
Intuitively speaking, the time-$\tau$ maps $\{ \phi_i^\tau\}$ ``almost commute'' (commute in the complement of a subset of area $7 N \delta$), hence 
their contribution to the quasimorphism ``almost adds up'' (up to a defect which is small compared to the values $\rho (\phi_i^\tau)$ and is controlled by $\delta$).

Pick $k \leq N$. We consider three $\phi_k^\tau$-invariant regions in $M$ according to the dynamics of points under $\phi_k^\tau$:
\begin{enumerate}
\item 	
  points outside $P_k$ and those points inside that are stationary under $\phi_k$. They do not contribute to $\rho (\phi_k^\tau)$.
\item 	
  the set $A_k$ of $m$-periodic points under $\phi_k^\tau$ that never hit any of the remaining pairs of pants $\{P_i\}_{i \neq k}$.
  
  All non-stationary points are contained in the three cutoff strips near $\partial P_k$. Each strip has area $\frac{1}{n}$ and we have to remove those points
  that are not $m$-periodic due to smoothing. Thus the set of $m$-periodic points has area bounded between $\frac{3}{n} - 6 \delta$ and $\frac{3}{n}$.
  After removing points whose trajectories intersect $\{P_i\}_{i \neq k}$ we get the estimate
  \[
	\Area (A_k) \geq \frac{3}{n} - 6 \delta - m  \sum_{i \neq k} \Area (P_i \cap P_k) \geq \frac{3}{n} - 6 \delta - mN \cdot \frac{\delta}{mN} = \frac{3}{n} - 7 \delta
  \]
  Points of $A_k$ are distributed in three strips near $\partial P_k$ and their trajectories, depending on the strip, represent either $a, a^{-1}$ or $aba^{-1}b^{-1}$. $a$ and $a^{-1}$
  do not affect $\rho (\phi_k^\tau)$ while the set $A'_k$ of points in the class $aba^{-1}b^{-1}$
  has $\Area (A'_k) \geq \frac{1}{n} - 3 \delta$ and contributes
  \[
	  \frac{\Area(A'_k) \cdot r(aba^{-1}b^{-1})}{m} \geq \frac{1}{nm} - \frac{3 \delta}{m} = \tau - \frac{3 \delta}{m}
  \]
  
\item
  the set $B_k$ containing the rest of points (non-stationary points whose period is different from $m$ or whose trajectory under $\phi_k^\tau$ hits other pairs of pants).
  By a similar computation, $\Area (B_k) \leq 7 \delta$. While we have less control on the dynamics of $B_k$, we note that the velocity of points in $B_k$ under the flow $\phi_k$ 
  is bounded by $v$ outside neighborhoods $Q$ of the intersections $P_k \cap P_j$ (it can still be very large inside $Q$ because of the adjustments). Also note that the time needed to travel 
  between different neighborhoods is larger than $\tau$. Hence for each $p \in B_k$, the trajectory $\{ \phi_k^t (p)) \}_{t \in [0, \tau]}$ has length at most $\tau v + d_Q$.
  
  Applying stabilization, the trajectory of $p$ under $(\phi_k^{\tau})^K$ has length bounded by $K(\tau v + d_Q)$ and $\left[l_p \left( (\phi_k^\tau)^K\right)\right]$ can be presented as a product
  of at most $K$ elements from $S_\tau$, so $r ([l_p ( (\phi_k^\tau)^K)]) \leq K (r_\tau + D)$.
  The contribution of $B_k$ to $\rho (\phi_k^\tau)$ is bounded in the absolute value by
  \[
	  \lim_{K \to \infty} \frac{1}{K} \int_{B_k} |r([l_p ((\phi_k^\tau)^K)])| \omega \leq (r_\tau + D) \cdot \Area (B_k) = 7 \delta  (r_\tau + D).
  \]
\end{enumerate}

Now note that for all $k \leq N$, $A_k$ is $\Phi^\tau$-invariant and the action of $\Phi^\tau$ coincides with that of $\phi_k^\tau$. Indeed, for a point $p \in A_k$, 
$(\Phi^{\tau})^K (p) = (\phi_k^{\tau})^K (p)$ for all $K$ as $p$ never hits the pairs of pants $\{P_i\}_{i \neq k}$. More than that, the trajectory $\{\Phi^t (p)\}_{t \in [0, \tau]}$ is homotopic
relative endpoints to $\{\phi_k^t (p)\}_{t \in [0, \tau]}$ (other ingredients of $\Phi^t (p) = \phi_1^t \circ \ldots \circ \phi_N^t (p)$ can be homotoped away). 
The same is true for iterations of $\Phi^\tau$, hence $A_k$ contributes to $\rho (\Phi^\tau)$ the same amount as to $\rho (\phi_k^\tau)$.
We finish with an observation that the sets $\{A_k\}_{k = 1}^N$ are disjoint and do not intersect $B = \bigcup_{j=1}^N B_j$.

Let $p \in B$. If $p \in P_N$, the trajectory $\{\Phi^t (p)\}_{t \in [0, \tau]}$ is the trajectory $\{\phi_N^t (p)\}_{t \in [0, \tau]}$ which was possibly deformed by the flows
$\phi_{N-1}, \ldots, \phi_1$. Such deformation occurs each time $\{\phi_N^t (p)\}_{t \in [0, \tau]}$ crosses a pair of pants $P_{N-1}, \ldots, P_1$. Within time $\tau$
only one such crossing is possible. As the result, $\{\Phi^t (p)\}_{t \in [0, \tau]}$ is homotopic (relative endpoints) to a path of length at most $(2 \tau v + d_Q)$ 
(the path can be longer than $\tau v + d_Q$ when $\{\phi_N^t (p)\}_{t \in [0, \tau]}$ either starts or ends at the intersection with other pair of pants).
The same argument can be applied to points $p \in B \cap (P_{N-1} \setminus P_N)$, $p \in B \cap (P_{N-2} \setminus (P_N \cup P_{N-1}))$ and so on by induction. 
After taking $K$ iterations, trajectory of any $p \in B$ is homotopic relative endpoints to a path shorter than $2K (\tau v + d_Q)$, thus the contribution of $B$
to $\rho (\Phi^\tau)$ in the absolute value is at most
\begin{eqnarray*}
 \lim_{K \to \infty} \frac{1}{K} \int_{B} |r([l_p ((\Phi^\tau)^K)])| \omega &\leq& 2 (r_\tau + D) \cdot \Area (B) \leq 2 (r_\tau + D) \cdot \sum_{i=1}^N \Area (B_i) \\
			  &\leq& 14 N \delta  (r_\tau + D).
\end{eqnarray*}

As a summary, points from $A = \bigcup_{i=1}^N A_i$ contribute to  $\rho (\Phi^\tau)$ at least $N \tau - \frac{3 N \delta}{m}$.
$B = \bigcup_{i=1}^N B_i$ alters the result by at most $14 N \delta  (r_\tau + D)$.
Points $p \in M \setminus (A \cup B)$ are stationary and do not contribute.
We deduce
\[
  \rho (\Phi^\tau) \geq N \tau - \frac{3 N \delta}{m} - 14 N \delta  (r_\tau + D) > (N-2) \tau.
\]

%%%%%%%%%%%%%%%%%%%%%%%%%%%%%%%%%%%%%%%%%%%%%%%%%%%%%%%%%%%%%%%%%%%%%%%%%%%
%%%%%%%%%%%%%%%%%%%%%%%%%%%%%%%%%%%%%%%%%%%%%%%%%%%%%%%%%%%%%%%%%%%%%%%%%%%
\subsection{General case}

Let $M$ be a symplectic surface of finite type and suppose 
$r: \pi_1 (M, *) \to \RR$ is a genuine homogeneous quasimorphism. Pick $a, b \in \pi_1 (M, *)$ such that 
$r (ab) \neq r (a) + r (b)$ (without loss of generality assume that $d_r = r (a) + r (b) - r (ab) > 0$). 
Let $\alpha, \beta$ be based loops representing $a, b$, respectively. Denote by $\rho$ be the induced quasimorphism on $\Ham (M)$.
Let $\gamma = \alpha * \beta$. We wish to construct a Hamiltonian diffeomorphism $\Phi^\tau$ such that $\rho (\Phi^\tau) >> \|\Phi^\tau\|_H$. 

\underline{Step I:}
Pick a small tubular neighborhood $P$ of $\gamma$.
Applying $C^0$-small perturbations to $\alpha, \beta, \gamma$ we may assume that the three curves intersect (and self-intersect) 
transversely in a finite number of points and still lie in $P$.

\underline{Step II:}
Consider $N$ deformed copies ($N >> 1$) $P_1 = h_1 (P), \ldots, P_N = h_N (P)$ of $P$ where the deformations $h_i$ are isotopic to the identity and area-preserving near $P$ so that:
\begin{itemize}
  \item 
	no triple intersections of $\{P_i\}_{i=1}^N$ occur
  \item
	the collection of loops $Q = \{\alpha_i = h_i (\alpha) \}_i \cup  \{\beta_i = h_i (\beta) \}_i \cup \{\gamma_i = h_i (\gamma) \}_i$ does not have triple intersection/self-intersection
	points, the total number of intersections is finite and they are all transverse.
\end{itemize}

\underline{Step III:}
Construct a time-dependent Hamiltonian flow $\Phi^t$ supported in a narrow tubular neighborhood of curves from $Q$ 
which translates points along $\{\alpha_i\}_{i=1}^N, \{\beta_i\}_{i=1}^N$ preserving orientation of curves 
and along $\{\gamma_i\}_{i=1}^N$ with reversed orientation.

\begin{figure}[!htbp]
\begin{center}
\includegraphics[width=0.6\textwidth]{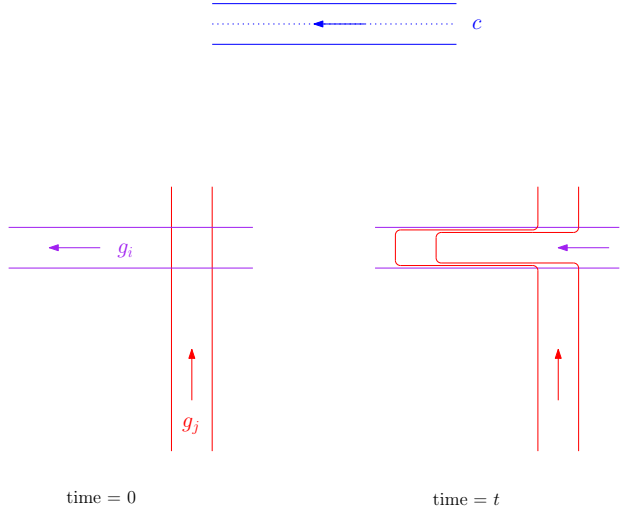}
\caption{Local picture}
\end{center}
\end{figure}

Local picture:
\begin{itemize}
  \item 
   near a curve $c \in Q$ and away from intersection points the flow is a parallel translation of a narrow strip around $c$ with fixed velocity and flux $1$. The flow is cut off in a 
   smooth way so that it remains parallel to $c$ and the area affected by the cutoff is small.
  \item
   near an intersection of two curves $c_i, c_j$ (or a self-intersection of two arcs $c_i, c_j$ of the same curve from $Q$): construct the flows $g_i, g_j$ parallel to $c_i, c_j$ as in the 
   previous case. We may symplectically perturb $g_i, g_j$ near $c_i \cap c_j$ to ensure that $\Area (\supp (g_i) \cap \supp (g_j))$ is sufficiently small. 
   Define the local flow $\Phi^t$ to be the composition $g_i^t \circ g_j^t$. Intuitively speaking, the curve $c_j$ and the flow $g_j$ following $c_j$ ``drift'' 
   with $g_i^t$. 
\end{itemize}
For $t$ small enough the flow charts can be adjusted and patched together to form a global symplectic time-dependent flow $\Phi^t$. More than that, using local adjustments, one can ensure that:
\begin{itemize}
  \item
	flow strips around $\alpha_i, \beta_i, \gamma_i$ are narrow enough and stay inside $P_i$.
  \item 
	local velocities are chosen in a way so that if one ignores all intersections, time $T$ needed to traverse a loop $c \in Q$ is the same for all loops in $Q$.
%	and equals $T = \frac{1}{n}$ for some fixed $n \in \NN$.
  \item
	the total area of the cutoff zones and intersections / self-intersections of the flow strips is at most $\delta$ ($\delta$ is a small parameter which will be specified later).
\end{itemize}
The construction described above works for $t \leq t_0$ where $t_0$ is the minimal time needed for an intersection point $p \in c_i \cap c_j$ to leave the appropriate chart 
when it follows the flows $g_i$ or $g_j$.

\medskip

Clearly, the flow $\Phi^t$ is area-preserving. We claim that it has zero flux, hence is Hamiltonian. Indeed, the flux along each $\alpha_i, \beta_i$ is $1$ while the flux along $\gamma_i$ is $-1$ due
to reversed orientation. A generic cycle $c$ in $(M, \partial M)$ satisfies $\#c \cap \alpha_i + \#c \cap \beta_i = \# c \cap \gamma_i$ (the intersection points are counted with signs), 
so the fluxes cancel out.
Denote by $H_t$ the Hamiltonian function which generates $\Phi^t$. It is a well known fact that $H_t (p) - H_t (q)$ equals the flux of $\Phi^t$ through a curve which connects $p$ to $q$
(for a Hamiltonian flow the flux depends on the endpoints and not on the curve itself). The argument above also shows that the flux between two points $p, q \notin \bigcup P_i$ is zero,
hence, up to a shift by appropriate constant, $\supp H_t \subseteq \bigcup P_i$.

For a generic curve $c$ connecting two points $p, q \in P_1 \setminus (\alpha_1 \cup \beta_1 \cup \gamma_1)$ let $K_c = \# c \cap \alpha_1 + \# c \cap \beta_1 - \# c \cap \gamma_1$ (count with signs).
Roughly, if one ignores the width of flow strips around the curves, $K_c$ measures the flux between $p$ and $q$ of the flows along $\alpha_1, \beta_1, \gamma_1$. 
$K = \max_c K_c$ equals the maximal flux between any two points in $P_1$. Fluxes of flows inside each $P_i$ are additive, so as long as no triple intersections are allowed and 
$t \leq t_0$ (meaning the curves do not drift far and combinatorics of their intersections does not change), the maximal flux of $\Phi^t$ (and hence the variation of $H_t$) is at most $2K$. 
That is, the Hofer length of the isotopy $\Phi^t$ is bounded by $2Kt$. Note that the bound $K$ may depend on the perturbations performed in Step I 
but does not depend on the choice of $N$ in Step II.
  
\medskip
  
Pick $\tau = \frac{T}{m}\leq t_0$ for some $m \in \NN$.
The flow strip around a curve $c \in Q$ has flux $1$ and period $T$, hence has area equal to $T$ (up to effects of smoothing which slow down the flow and are controlled by $\delta$).
We subtract points which are slowed by smoothing and also those whose trajectory under $\{(\Phi^\tau)^k\}_{k \in \ZZ}$ lands on the intersections/self-intersections of strips.
The remaining region of area $T - O(\delta)$ is $m$-periodic and contributes to $\rho (\Phi^\tau)$ either $$T \cdot \frac{r(a)}{m} + O(\delta) = \tau r(a) + O(\delta)$$
in the case $c = \alpha_i$, $\tau r(b) + O(\delta)$ if $c = \beta_i$ or $-\tau r(ab) + O(\delta)$ when $c = \gamma_i$ for some $i \leq N$.

The rest of non-stationary points (either affected by smoothing or those visiting intersections of strips) are contained in a region $B$ of area $O (\delta)$. 
The long-term dynamics of $p \in B$ can be complicated, but locally (till time $\tau$) $p$ can visit at most one intersection of strips, so the length of trajectory $\{\Phi^t (p)\}_{t \in [0, \tau]}$
is uniformly bounded. As the result, $\left|\frac{1}{k}r([l_p ((\Phi^\tau)^k)])\right|$ is also uniformly bounded by some $C > 0$ and the contribution of $B$ to $\rho (\Phi^\tau)$ is bounded 
in the absolute value by $$\Area (B) \cdot C = O(\delta).$$

Summing up, 
\[
  \rho (\Phi^\tau) = \sum_{i=1}^N (\tau r(a) + \tau r(b) - \tau r(ab) + O(\delta)) + O(\delta) = \tau N d_r + O(\delta).
\]
Now we pick $\delta = \delta(N)$ small enough to ensure $\rho (\Phi^\tau) > \tau (N-1) d_r$ 
and perform rearrangements of smoothing and intersection zones to comply with this chosen value of $\delta$. These local adjustments do
not affect $t_0$ or estimates of trajectory lengths, hence all computations we performed above remain in place. The claim follows by picking $N$ such that $N d_r >> 2 K$.

%%%%%%%%%%%%%%%%%%%%%%%%%%%%%%%%%%%%%%%%%%%%%%%%%%%%%%%%%%%%%%%%%%%%%%%%%%%
%%%%%%%%%%%%%%%%%%%%%%%%%%%%%%%%%%%%%%%%%%%%%%%%%%%%%%%%%%%%%%%%%%%%%%%%%%%
\subsection{Higher dimensions}
The construction above can be lifted to a symplectic ball bundle $\widehat{M} \to M$. We lift the flow $\Phi^t$ from $M$
and cut it off near $\partial \widehat{M}$. When the cutoff is steep enough, it affects only a small volume of motion while keeping dynamics under control
(cutoff keeps the $M$ component of the flow velocity bounded and introduces a fast rotation around the boundary in the fiber coordinates. This rotation does not affect average 
homotopy classes of orbits.)
Hence its effect on the quasimorphism $\widehat{\rho} : \Ham (\widehat{M}) \to \RR$ can be made arbitrarily small. 
The bounds on the Hofer length in $\widehat{M}$ remain the same as in $M$ and non-continuity follows.

Let $M$ be a higher dimension symplectic manifold, $r : \pi(M, *) \to \RR$ a genuine homogeneous quasimorphism.
Let $\alpha, \beta$ be based loops in $M$ such that $r ([\alpha * \beta]) \neq r([\alpha]) + r ([\beta])$.
Perturb $\alpha, \beta$ so that they intersect at one point and that near the intersection they lie in a symplectic $2$-disk $D$.
$D$ can be extended to a smooth symplectic surface $\Sigma$ with boundary which contains both $\alpha$ and $\beta$. By the symplectic neighborhood theorem, a small neighborhood
of $\Sigma$ in $M$ is symplectomorphic to a ball bundle over $\Sigma$. Now we may apply the construction for ball bundles and show non-continuity of the quasimorphism induced by $r$.

\begin{rmk}
  Lev Buhovsky has pointed out that in the case of ball bundles there is a much simpler construction of Hamiltonian flows which also shows non-continuity of quasimorphisms (~\cite{Bu:private-com}).
  However this construction does not apply in the two-dimensional case so we could not avoid the more complicated setup.
\end{rmk}

%%%%%%%%%%%%%%%%%%%%%%%%%%%%%%%%%%%%%%%%%%%%%%%%%%%%%%%%%%%%%%%%%%%%%%%%%%%
\section{Gambaudo-Ghys construction}\label{S:GG}
%%%%%%%%%%%%%%%%%%%%%%%%%%%%%%%%%%%%%%%%%%%%%%%%%%%%%%%%%%%%%%%%%%%%%%%%%%%

\subsection{The quasimorphisms}

Let $(M, \omega)$ be a symplectic surface of finite type and area $1$, equipped with an auxiliary Riemannian metric. Denote by $B_k$ the braid group with $k$ strands in $M$ and by $PB_k$ the pure braid group.
Suppose $\{g_t\} \in \Ham (M)$ is an isotopy starting at the identity and ending at $g$. 
Pick $k$ distinct points $w_1, \ldots, w_k$ in $M$ to be a basepoint $\mathbf{w}$ in the configuration space $X_k (M)$. For an $\mathbf{x} = (x_1, \ldots, x_k) \in X_k (M)$ we construct
$k$ based loops $l_{w_j,x_j} (g_t)$ by concatenating a short geodesic path $\gamma_{w_j, x_j}$ with the trajectory $\{g_t (x_j)\}$ and closing the loop by a short geodesic path 
$\gamma_{g(x_j), w_j}$.
For almost all $\mathbf{x} \in X_k$, the $k$-tuple of loops $l_\mathbf{x} (g_t) = \left(l_{w_1,x_1} (g_t), \ldots, l_{w_k,x_k} (g_t) \right)$ defines a based loop in $X_k (M)$.
$\pi_1 (X_k, \mathbf{w})$ can be identified with the pure braid group $PB_k$, hence $[l_\mathbf{x} (g_t)]$ defines an element in $PB_k$.

Let $r: B_k \to \RR $ be a homogeneous quasimorphism. Then
\[
  \hat{\rho}(g_t) = \int_{X_k} r\left([l_{\mathbf{x}} (g_t)]\right) \omega^k
\]
defines a quasimorphism $\hat{\rho}: \widetilde{\Ham (M)} \to \RR$ and its homogenization
\[
  \rho(g_t) = \lim_{m \to \infty} \frac{1}{m} \int_{X_k} r\left([l_{\mathbf{x}} ((g_t)^m)]\right) \omega^k
\]
does not depend on the choices of metric, basepoint $\mathbf{w}$, geodesic paths or $g_t$, resulting in a 
homogeneous quasimorphism $\rho: \Ham(M) \to \RR$. For a more detailed explanation see ~\cite{Bra:biinv-quas}.

This construction generalizes the quasimorphisms by Polterovich: $PB_1 (M) \simeq \pi_1 (M, *)$, therefore for $k=1$ both constructions agree.

\begin{rmk}
In the case of a disk $M = D^2$, $PB_2 (M) \simeq \ZZ$. In this case any homogeneous quasimorphism $r : PB_2 (M) \to \RR$ is a homomorphism and the induced 
quasimorphism $\rho : \Ham (D^2) \to \RR$ is a multiple of the Calabi homomorphism (see ~\cite{Ga-Ghys:enlace, Fa:thesis}).

This way, the Gambaudo-Ghys construction can be seen as a generalization of the Calabi homomorphism.
\end{rmk}

\subsection{Noncontinuity}

Denote by $F_k$ the subgroup of $PB_k$ defined by fixing strands $2$ to $k$ and letting the first strand wind around. 
$F_k$ fits into a short exact sequence
\[
  0 \to F_k \to PB_k \to PB_{k-1} \to 0
\]
where the map $PB_k \to PB_{k-1}$ is the Chow homomorphism which ``forgets'' the first strand of $B_k$.

We consider a particular family of quasimorphisms: suppose $r : B_k \to \RR$ is a homogeneous quasimorphism 
such that the restriction $r \big|_{F_k}$ is a genuine quasimorphism. We show that the quasimorphism $\rho: \Ham (M) \to \RR$ induced by the Gambaudo-Ghys construction is not Hofer-Lipschitz.

\begin{rmk}
  E. Shelukhin pointed out that in the case of surfaces with boundary, $PB_k$ may be presented as a semidirect product of $F_k$ with $PB_{k-1}$
  ($PB_{k-1}$ is embedded into $PB_k$ by adding a stationary strand near the boundary). In this case one can show by induction that every genuine homogeneous quasimorphism on $B_k$
  restricts to a genuine quasimorphism on $F_k$.
  
  If $M$ has no boundary, consider a punctured surface $\dot{M} = M \setminus \{p\}$. The natural inclusion $\dot{M} \to M$ induces a surjective homomorphism $j: B_k (\dot {M}) \to B_k (M)$ of braid groups.
  In this case the pullback quasimorphism $j^*r : B_k (\dot {M}) \to \RR$ satisfies the condition by the remark above, therefore its restriction $j^*r \big|_{F_k (\dot{M})}$ is a genuine quasimorphism.
  But $$j^*r \big|_{F_k (\dot{M})} = j\big|_{F_k (\dot{M})}^* \left( r \big|_{F_k (M)} \right),$$ hence $r \big|_{F_k (M)}$ is genuine as well. 
  
  Therefore this technical condition on $r \big|_{F_k}$ is automatically satisfied.
\end{rmk}

When $M = S^2$ and $k \leq 3$, $B_k$ is finite and all homogeneous quasimorphisms are trivial.
Otherwise $F_k$ is naturally isomorphic to the fundamental group of $k-1$ times punctured surface $M ' = M \setminus \{p_2, \ldots, p_k\}$ (see ~\cite{Bi:braids}).
Let $r' : \pi_1(M', *) \to \RR$ be the quasimorphism induced by $r \big|_{F_k}$. Pick $a, b \in \pi_1 (M', *)$
such that $r' (a) + r' (b) \neq r' (ab)$.

\medskip

We use construction from the previous section and apply a variation of the argument by T. Ishida ~\cite{Ish:quasi-braid}. Given $N >> 1$ and sufficiently small $\delta = \delta (N)$, 
consider a narrow region $P_N$ which contains loops representing classes $a, b, ab$. We place $N$ copies of 
$P_N$ in $M'$ without triple intersections and construct $\Phi_N^{\tau_N} \in \Ham (M')$. 
Let $g_N = \left(\Phi_N^{\tau_N}\right)^{m_N}$ where $m_N$ is the common period for most non-stationary points under $\Phi_N^{\tau_N}$.

$g_N$ has several invariant subsets:
\begin{itemize}
  \item 
	subsets $A_a, A_b, A_{ab}$ that are fixed by $g_N$. Trajectories of points in $A_a$ represent $a$, while $A_b$ and $A_{ab}$ represent $b$ and $-(ab)$, respectively.
%	(${n} = \frac{1}{T}$ is taken from the step III of the construction of $\Phi_N^{\tau_N}$ and depends on $N$). 
	Each of the three subsets has area $N T_N + O (\delta) = N m_N \tau_N + O (\delta)$,
  \item
	subset $B$ with $\Area (B) = O (\delta)$. Trajectories in $B$ are beyond control and have topological complexity of $O(m_N)$,
  \item
	all remaining points in $M'$ are stationary under the flow.
\end{itemize}
In addition, the Hofer norm $\|g_N\|_{M'} \leq 2 m_N \tau_N = 2 T_N$ and $\delta$ can be chosen arbitrarily small without affecting the estimates above.

Pick $\varepsilon << 1$ and $d_2, \ldots d_k \in [0, 1]$ such that $\varepsilon + d_2 + \ldots + d_k = 1$.
Rescale the symplectic form of $M'$ so that $\Area (M') = \varepsilon$ and glue in disks $D_2, \ldots, D_k$ of area $d_2, \ldots, d_k$ in the punctures $p_2, \ldots, p_k$, respectively.
The resulting surface $\widehat{M}$ has area $1$ hence is symplectomorphic to the unpunctured surface $M$. $g_N$ induce Hamiltonian diffeomorphisms on $\widehat{M}$
which will be denoted by $g_N$ as well, and due to rescaling their Hofer norms satisfy $$\|g_N\|_{\widehat{M}} \leq 2 T_N \varepsilon.$$
 
We estimate $\rho (g_N)$. Consider possible braids that arise from trajectories of $\mathbf{x} = (x_1, \ldots x_k) \in X_k (M)$ by $g_N$ and their contribution to $\rho (g_N)$:
\begin{itemize}
  \item 
	all $k$ points belong to the disks $\{D_j\}$, hence are stationary. Such braids are trivial hence do not affect the value of $\rho$.

  \item 
	one point lies in $M'$ and exactly one point belongs to each disk $D_j$, $2 \leq j \leq n$. 
	Order of points does not matter: reordering of points corresponds to permutation of strands.
%	(up to manipulations with geodesic segments which disappear under stabilization). 
	$r$ is homogeneous, hence invariant under conjugations in $B_k$ that permute stands.
	Without loss of generality, assume $x_1 \in M'$. $l_\mathbf{x} (g_N)$ is induced by the trajectory of $x_1$ in $M'$ and $k-1$ stationary strands in the disks. 
	$$r([l_\mathbf{x} (g_N)]) = r'([l_{x_1} (g_N)])$$ 
	(the same is true for iterations of $g_N$).
	$A_a \cup A_b \cup A_{ab}$ contribute 
	\[
	  \left(N T_N  + O(\delta)\right) \left(r'(a) + r' (b) - r'(ab)\right) \varepsilon d_2 \ldots d_k
  	\]
% 	\[ 
% 	  = \left(N (r'(a) + r' (b) - r'(ab)) + O(\delta)\right) \varepsilon d_2 \ldots d_k
% 	\]
	while $B$ contributes $$O (\delta) \cdot O \left(m_N\right) \varepsilon d_2 \ldots d_k = O (\delta) \varepsilon d_2 \ldots d_k$$ as $\delta$ can be chosen independently of $m_N$.
	Stationary points in $M'$ induce a trivial braid and do not contribute anything.
	The total contribution is
	\[
	  \left(k! N T_N (r'(a) + r' (b) - r'(ab)) + O(\delta)\right) \varepsilon d_2 \ldots d_k = 
	\]
	\[
	  = \left(N T_N R_{1,2,\ldots, d_k}+ O(\delta)\right)\varepsilon d_2 \ldots d_k.
	\]
	$k!$ takes into account reorderings of points and the resulting coefficient $$R_{1,2,\ldots, d_k} \neq 0$$.
	
  \item 
	$x_i \in M'$ for some $1 \leq i \leq k$ while the rest belong to disks $\{D_j\}$ and the correspondence between points and disks is not bijective.
	Let $i_2, \ldots, i_k$ denote the indices of disks for the points $\{ x_j \}_{j \neq i}$ in the ascending order of $j$. 
	The contribution of this configuration of points is given by
	$$R_{N T_N, i, i_2, \ldots, i_k} \varepsilon d_{i_2} \ldots d_{i_k}$$
	for some coefficient $R_{N T_N, i, i_2, \ldots, i_k} \in \RR$. Similarly to the previous case, the dynamics of $g_N$ implies that this coefficient scales linearly with $N T_N$ (up to $O(\delta)$), 
	so 	we may rewrite
	$$R_{N T_N, i, i_2, \ldots, i_k} \cdot \varepsilon d_{i_2} \ldots d_{i_k} = (N T_N R_{i, i_2, \ldots, i_k} + O(\delta)) \varepsilon d_{i_2} \ldots d_{i_k}.$$
	Summing over all configurations, we get the following polynomial in $\varepsilon, d_2, \ldots, d_k$
	\[
	  \sum_{\begin{subarray}{c} 1 \leq i \leq k \\ 2 \leq i_2, \ldots, i_k \leq k \end{subarray}} (N T_N R_{i, i_2, \ldots, i_k} + O(\delta)) \varepsilon d_{i_2} \ldots d_{i_k}
	\]
	which does not contain the monomial $\varepsilon d_2 \ldots d_k$.

  \item 
	two points or more belong to $M'$, the rest lie in disks $\{D_j\}$. Topological complexity of the strand is $O\left(m_N\right)$ (this bound is independent of $\delta$) and the contribution
	is 	$O\left(m_N \cdot \varepsilon^2\right)$ (areas $d_i$ are bounded by $1$ and can be incorporated into $O\left(m_N \cdot \varepsilon^2\right)$).
\end{itemize}

So the total value of the quasimorphism $\rho (g_N)$ is
\[
  	N T_N \varepsilon \left[ \left(R_{1,2,\ldots, d_k}+ \frac{O(\delta)}{N T_N}\right) d_2 \ldots d_k + \sum \left(R_{i, i_2, \ldots, i_k} + 
  	\frac{O(\delta)}{N T_N}\right) d_{i_2} \ldots d_{i_k} \right] + O\left(m_N \cdot \varepsilon^2\right)
\]
The polynomial $R_{1,2,\ldots, d_k} d_2 \ldots d_k + \sum R_{i, i_2, \ldots, i_k} d_{i_2} \ldots d_{i_k}$ is not zero, so one may find
values for $d_2, \ldots, d_k$ in the simplex $\{d_i \geq 0, \; \sum d_i < 1\}$ where the polynomial does not vanish. 
Pick $N >> 1$. $\delta$ and $\varepsilon$ are independent parameters. As soon as we fix $N$ (hence also $T_N$ and $m_N$), we let $\delta, \varepsilon \to 0$ while $d_2, \ldots, d_k$ 
are rescaled keeping their ratios constant and the total area 
$$\Area \left(\widehat{M}\right) = \varepsilon + d_2 + \ldots + d_k = 1.$$ 
In the limit $\rho (g_N)$ is proportional to $N T_N \varepsilon$ while Hofer's norm is bounded by $2 T_N \varepsilon$. It follows that $\rho$ cannot be Lipschitz.

% Pick $N >> 1$. As soon as we fix $N$ (and hence also $T_N$ and $m_N$), we let $\varepsilon \to 0$ while $d_2, \ldots, d_k$ are rescaled keeping their ratios constant and the total area
% $$\Area \left(\widehat{M}\right) = \varepsilon + d_2 + \ldots + d_k = 1.$$ 
% At last, $\delta = \delta (N, \varepsilon)$ is chosen to be sufficiently small.
% In the limit $\rho (g_N)$ is proportional to $N T_N \varepsilon$ while Hofer's norm is bounded by $2 \varepsilon$. It follows that $\rho$ cannot be Lipschitz.

\bibliography{bibliography}

\end{document}